\numberwithin{equation}{section}
\newtheorem{theorem}{Theorem}[section]
\newtheorem{definition}[theorem]{Definition}
\newtheorem{lemma}[theorem]{Lemma}
\newtheorem{proposition}[theorem]{Proposition}
\newtheorem{remark}[theorem]{Remark}
\begin{document}

\title{On the Rationality of Nagaraj-Seshadri Moduli Space}

\author[P. Barik]{Pabitra Barik}

\address{Department of Mathematics, Indian Institute of Technology-Madras, Chennai, India }

\email{pabitra.barik@gmail.com}

\author[A. Dey]{Arijit Dey}

\address{Department of Mathematics, Indian Institute of Technology-Madras, Chennai, India }

\email{arijitdey@gmail.com}

\author[Suhas, B N]{Suhas, B N}
\address{Department of Mathematics, Indian Institute of Technology-Madras, Chennai, India }
\email{chuchabn@gmail.com}
\subjclass[2010]{14D20,14E08}
\keywords{vector bundles, moduli space, rationality}

\maketitle
\abstract
We show that each of the irreducible components of moduli of rank $2$ torsion-free sheaves with odd Euler characteristic over a reducible nodal curve is rational.
\endabstract
\section{Introduction}

Let $X$ be a reducible nodal curve over an algebraically closed field $k$ of characteristic 0 such that it is a union of two smooth irreducible components $X_{1}$ of genus $g_{1} \geq 2$ and $X_{2}$ of genus $g_{2} \geq 2$ meeting exactly at one node $p$. Let ${\bf a}\,=\,(a_{1},a_{2})$ be a tuple of positive rational numbers such that $a_1+a_2\,=\,1;$ we call this a polarisation on $X$. Let $\chi$ be an integer such that $a_{1}\chi$ is not an integer. In this setting it is a theorem of
Nagaraj-Seshadri \cite[Theorem 4.1]{N-S} that the moduli space $M(2,{\bf{a}},\chi)$ of semi-stable rank two torsion-free sheaves on $X$ with Euler characteristic $\chi$ is a reduced, connected projective scheme with exactly two irreducible components, and when $\chi$ is odd, the moduli space is a union of two smooth varieties $M_{12}$ and $M_{21}$ intersecting transversally along a smooth divisor $N$.

Let ${\bf \xi}\,= \,(L_{1},L_2)$, where $L_1$ and $L_2$ are two invertible sheaves on $X_{1}$ and $X_{2}$ (of suitable \text{\text{deg}}rees) respectively.
Then in \cite[Section 7]{N-S} the analogue of a "fixed determinant moduli space"  has been defined and we denote it by $M(2,{\bf a},\chi,{\bf \xi})$. It is shown in (\cite{Hua}, \cite{S-B})  that when $\chi$ is odd and $a_1\chi$ is not an integer, $M(2,{\bf a},\chi,{\bf \xi})$ is also a reduced, connected projective scheme with exactly two smooth components meeting transversally along a smooth divisor. The main result of this article is the following:

\begin{theorem}\label{Theorem 1.1}
 If $gcd~(\chi,2)~=~1,$ then both the irreducible components of $M(2,{\bf a},\chi,{\bf \xi})$ are rational. In particular $M(2,{\bf a},\chi,{\bf \xi})$ is rationally connected.
\end{theorem}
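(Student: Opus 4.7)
The plan is to establish rationality of each irreducible component of $M(2,\mathbf{a},\chi,\xi)$ by exhibiting a dense open subset as a fibration with rational fibres over a rational base. The two main ingredients will be a Nagaraj-Seshadri-type birational description of torsion-free sheaves on $X$ in terms of vector bundles on $X_1$ and $X_2$ glued at the node $p$, together with Hecke modifications, and the theorem of King-Schofield asserting that $SU_Y(2,L)$ is rational whenever $Y$ is a smooth projective curve of genus $\geq 2$ and $\deg L$ is odd.

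The first step is to identify a dense open subset $U\subset M_{12}$ consisting of sheaves $E$ that are locally free at $p$ and whose restrictions $V_i := E|_{X_i}$ are stable rank-$2$ bundles on $X_i$. Over $U$ there is a natural restriction morphism
\[
f\colon U \longrightarrow SU_{X_1}(2,L_1)\times SU_{X_2}(2,L_2),\qquad E\longmapsto (E|_{X_1},E|_{X_2}),
\]
whose fibre over a stable pair $(V_1,V_2)$ is the space of determinant-compatible isomorphisms $\phi\colon V_1|_p\xrightarrow{\sim} V_2|_p$ modulo $\mathrm{Aut}(V_1)\times \mathrm{Aut}(V_2)=\{\pm 1\}\times\{\pm 1\}$. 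This quotient is $\mathrm{SL}_2/\{\pm 1\}\cong \mathrm{PGL}_2$, a rational threefold.

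The principal difficulty is that, because $\chi$ is odd, $\deg L_1$ and $\deg L_2$ have opposite parities, so King-Schofield applies directly to only one of the two base factors; rationality of the even-determinant factor is not known for $g_i\geq 3$ in general. To bypass this, the idea is to perform an elementary transformation (Hecke modification) at $p$ on the even-determinant side: a rank-$2$ bundle $V_i$ on $X_i$ with $\det V_i=L_i$ of even degree, equipped with a line $\ell\subset V_i|_p$, corresponds via $V_i':=\ker(V_i\twoheadrightarrow V_i|_p/\ell)$ to a rank-$2$ bundle with $\det V_i'=L_i(-p)$ of odd degree together with a line in $V_i'|_p$. Absorbing this datum of a line into the fibre description realises $M_{12}$ as birational to a fibration whose base is a product of two odd-determinant moduli spaces (both rational by King-Schofield) and whose fibre is a rational variety (a $\mathbb{P}^1$-bundle over $\mathrm{PGL}_2$).

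The last step is to verify that each of these intermediate fibrations is birationally trivial, so that the total space is birational to the product of base and fibre; this reduces to triviality of certain Brauer classes on the odd-determinant moduli factors, which follows from the existence of a Poincar\'e bundle available precisely when $\gcd(2,\chi)=1$. An identical argument handles $M_{21}$ by swapping the roles of $X_1$ and $X_2$, and rational connectedness of $M(2,\mathbf{a},\chi,\xi)$ is then immediate since both rational components meet along the smooth divisor $N$. I expect the main technical obstacle to be the careful bookkeeping of automorphism groups and Brauer classes across the Hecke modification; by contrast, the rationality of each individual piece --- odd-degree $SU_Y(2,L)$, $\mathbb{P}^1$-bundle, $\mathrm{PGL}_2$ --- can be treated as a black box.
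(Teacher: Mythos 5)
Your proposal takes a genuinely different route from the paper (which follows Newstead: it builds a $(3g-3)$-dimensional affine family of extensions $0\to\mathcal{O}_X\to E\to L\to 0$ with $h^0(E)=1$, checks stability and fixed determinant of the general member, and gets an injective, hence birational, classifying map to $M_{12}(\xi)$), but as written your reduction has a genuine gap at the Hecke-modification step. The elementary transformation at $p$ does not operate on $E$ itself: a generic point of $M_{12}(\xi)$ is a triple $(E_1,E_2,T)$ with $T$ an isomorphism, so there is no canonical line $\ell\subset E_2(p)$; the line is an extra choice. Consequently what your fibration over $SU_{X_1}(2,L_1)\times SU_{X_2}(2,L_2(-p))$ actually parametrizes is the space of pairs $(E,\ell)$, i.e.\ the projectivized bundle $\mathbb{P}(\mathcal{E}_2|_{M_{12}(\xi)\times\{p\}})$ of the universal family, not $M_{12}(\xi)$. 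The dimension count confirms this: your base has dimension $(3g_1-3)+(3g_2-3)=3g-6$ and your fibre ($\mathbb{P}^1$-bundle over $\mathrm{PGL}_2$) has dimension $4$, total $3g-2$, whereas $\dim M_{12}(\xi)=3g-3$. So even granting all the Brauer-class and Poincar\'e-bundle arguments (which do work on the two odd-determinant factors), your argument proves at best that $M_{12}(\xi)\times\mathbb{P}^1$ is rational, i.e.\ a stable rationality statement; since stable rationality does not imply rationality, the theorem does not follow. If you drop the Hecke move to avoid the extra $\mathbb{P}^1$, you are back to fibring over $SU_{X_1}(2,L_1)\times SU_{X_2}(2,L_2)$ with one even-degree factor, whose rationality (indeed even stable rationality) is open, as you yourself note; so the difficulty is not mere bookkeeping but structural.

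Two smaller inaccuracies: $M_{12}(\xi)$ fixes only the pair $(\Lambda^2E_1,\Lambda^2E_2)$, not a gluing of the determinants at $p$, and automorphisms of stable bundles are the full scalar group $k^*$; the fibre of your restriction map is $\mathrm{Isom}(E_1(p),E_2(p))/(k^*\times k^*)\cong\mathrm{PGL}_2$, so the $\{\pm1\}\times\{\pm1\}$ bookkeeping is off even though the fibre is still $\mathrm{PGL}_2$. By contrast, the paper's extension-space construction never invokes King--Schofield, Hecke modifications, or Brauer classes, and proves rationality of each component directly; your approach, if the $\mathbb{P}^1$ discrepancy could be removed (for instance by producing a canonical line at $p$ on a dense open set, which does not exist away from the divisor $N$), would yield a structurally informative alternative, but at present it establishes a strictly weaker conclusion.
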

Over a smooth projective curve of genus $g \geq 2,$ the rationality of the moduli space was first proved by Tjurin \cite[Theorem 14]{TJ} in the rank $2$ and odd \text{\text{deg}}ree case. When rank and \text{\text{deg}}ree are coprime this result was generalized by Newstead \cite{N}, \cite{N2}, King and Schofield  \cite{K-S} in higher order of generalities.
It is still not known if the moduli space is rational or not in the non-coprime case, even for rank $2$ and \text{\text{deg}}ree $0$. In the non-smooth case, when the curve is irreducible and has any number of nodal singularities and genus $\geq2,$ rationality in the coprime case was proved by Bhosle and Biswas \cite[Theorem 3.7]{B-B}.  Over a reducible nodal curve $X$ as described above it has been shown by Basu that each irreducible component of $M(2,{\bf a},\chi,{\bf\xi})$ is unirational \cite[Lemma 2.5]{S-B}. Motivated by this result we go to the next step i.e. to prove rationality of each of these components. The proof of our result broadly follows the strategy of Newstead \cite{N} but involves several technical difficulties.

It is well known that the moduli space of bundles over curves has a good specialization property, i.e. if a smooth projective curve $Y$ specializes to a projective curve $X$ with nodes as the only singularities, then the moduli space of vector bundles $M_Y$ on $Y$ specializes to the moduli space of torsion-free sheaves $M_X$ on $X$ \cite{Gi}, \cite{N-S-2}, \cite{S1}, \cite{S2}. It is known that rationality of projective varieties does not have a good specialization property, for example a family of cubic surface which is rational specializes to a non-rational surface which is birational to $E \times \mathbb P^1$ where $E$ is a cubic curve . 
Our result shows that in the rank $2$ and odd Euler characteristic case the moduli space of vector bundles gives an example of a family of rational varieties specializing to a rationally connected variety with two irreducible rational components. We hope that Theorem \ref{Theorem 1.1} will be useful in the study of \text{\text{deg}}eneration of higher \text{\text{\text{\text{dim}}}}ensional smooth projective algebraic varieties.

Further it will be interesting to see if Theorem \ref{Theorem 1.1} can be generalized to a more general situation i.e if the underlying curve $C$ has more than $2$ components together with more than one node. In such a general situation the moduli space of semistable torsion-free sheaves (arbitrary rank) has been constructed by Seshadri (see [Chapter VII, \cite{S2}). In particular when $C$ is a tree like curve without any rational components, then the number of components of the moduli space and inequalities involving Euler characteristics has been computed by Montserrat Teixidor I Bigas \cite{tei}, \cite[Theorem 3.2]{tei2}. It will be interesting to investigate the rationality of each of these components.

{\bf Acknowledgement.} It is a pleasure to thank V. Balaji and D.S. Nagaraj for having many useful discussions during
the period of this work. We thank P. E. Newstead  for his valuable comments and remarks. We also thank Suratno Basu for answering some of our questions which helped us in understanding Nagaraj-Seshadri's paper. We would like to thank the referee for pointing out a gap in the older version of our manuscript. 

\section{A Brief Description of the Moduli Space}

In this section, we shall briefly recall some of the results proved in \cite{N-S} which will be useful in later sections. Let $X$  be a reducible projective nodal curve as before which has two smooth irreducible components $X_1$ and $X_2$ meeting at the nodal point $p$.  Any torsion-free sheaf $E$ on $X$ can be identified with a triple $(E_{1}, E_{2},\overrightarrow{T})$ or $( E_{1}^{\prime}, E_{2}^{\prime} , \overleftarrow{S} )$ where $E_{i}$'s are locally free sheaves over $X_{i}$'s and $\overrightarrow{T}$ and $\overleftarrow{S}$ are linear maps from $E_{1}(p)$ to $E_{2}(p)$ and $E_{2}^{\prime}(p)$ to $E_{1}^{\prime}(p)$ respectively. In fact in \cite[Lemma 2.3]{N-S}, an equivalence is shown between the category of torsion-free sheaves and the category of triples.  Let $E$ be a torsion-free sheaf on $X$ identified by the triple $(E_{1}, E_{2},\overrightarrow{T})$ as well as $( E_{1}^{\prime}, E_{2}^{\prime} , \overleftarrow{S} )$. Then we have the following equality of Euler characteristics between them (see\cite{N-S},Remark 2.11)-
\begin{equation}\label{1}
 \chi(E)= \chi(E_{1}, E_{2},\overrightarrow{T})= \chi( E_{1}^{\prime}, E_{2}^{\prime} , \overleftarrow{S}),
\end{equation}

 where
 \begin{eqnarray}\label{2}
  \chi(E_{1}, E_{2},\overrightarrow{T}):= \chi(E_{1}) + \chi(E_{2})- rk(E_{2}),
 \end{eqnarray}
  and
 \begin{equation}\label{3}
  \chi( E_{1}^{\prime}, E_{2}^{\prime} , \overleftarrow{S}):= \chi( E_{1}^{\prime})+ \chi(E_{2}^{\prime}) - rk(E_{1}^{\prime}).
 \end{equation}

 Let ${\bf a}\,=\,(a_{1},a_{2})$ be a polarisation on $X$ with $a_i \, > \,0$ rational numbers and $a_{1} + a_{2} = 1.$ For every non-zero triple $(E_{1},E_{2},\overrightarrow{T}),$ we define
  \begin{equation*}
   \mu((E_{1},E_{2},\overrightarrow{T}))\, = \,\frac{\chi(E_{1},E_{2},\overrightarrow{T})}{a_{1}rk(E_{1})+a_{2}rk(E_{2})}.
  \end{equation*}
  \begin{definition}
  Let $(E_{1},E_{2},\overrightarrow{T})$ be a triple. We say that the triple $(E_{1},E_{2},\overrightarrow{T})$ is stable (resp. semi-stable) if for every proper subtriple $(G_{1},G_{2},\overrightarrow{U}),$ \\
  $\mu((G_{1},G_{2},\overrightarrow{U})) \,<\, \mu((E_{1},E_{2},\overrightarrow{T}))$ (resp. $\leq$).
  \end{definition}

When $\chi$ is odd and $a_1\chi$ is not an integer the moduli space $M(2,{\bf a},\chi)$ of semistable torsion-free sheaves on $X$ with Euler characteristic $\chi$ is a reduced, connected projective variety  with the two smooth irreducible components $M_{12}$ and $M_{21}$ intersecting transversally on a smooth projective variety $N$ \cite[Theorem 4.1]{N-S}. The irreducible components $M_{12}$ and $M_{21}$ have the following description in terms of triples:

 The first component $M_{12}$ is a smooth projective variety which is a fine moduli space of stable triples $(E_{1}, E_{2},\overrightarrow{T})$ such that $E_{i}$'s are rank 2 vector bundles over $X_{i}$'s, and $\overrightarrow{T}:\,E_{1}(p)\rightarrow E_{2}(p)$ is a nonzero linear map such that
 \begin{equation}\label{4}
  a_{1}\chi \,< \,\chi(E_{1}) \,< \,a_{1}\chi +1 ,~a_{2}\chi +1 \,< \,\chi(E_{2}) \,< \,a_{2}\chi +2 .
 \end{equation}
The second component $M_{21}$ also has a similar description in terms of triples. It is a smooth projective variety which is a fine moduli space of stable triples $( E_{1}^{\prime}, E_{2}^{\prime} , \overleftarrow{S})$ such that $E_{i}^{\prime}$'s are rank 2 vector bundles over $X_{i}$'s, and $\overleftarrow{S}:\, E_{2}^{\prime}(p) \rightarrow E_{1}^{\prime}(p)$ is a nonzero linear map such that
 \begin{equation}\label{5}
  a_{1}\chi +1 < \chi(E_{1}^{\prime}) < a_{1}\chi +2 ,~a_{2}\chi < \chi(E_{2}^{\prime}) < a_{2}\chi +1.
 \end{equation}
The intersection $N = M_{12} \cap M_{21}$ can be identified with $P_{1}\times P_{2}$ where $P_{i}$'s are certain parabolic moduli spaces over $X_{i}$'s (see \cite{N-S}, Theorem 6.1 for details).
In terms of triples, $N$ is given by
\begin{eqnarray*}
   \{[E_{1},E_{2},\overrightarrow{T}] \in M_{12}~|~rk(\overrightarrow{T})=1\},
\end{eqnarray*}
which can be identified with
\begin{eqnarray*}
 \{[E_{1}^{\prime},E_{2}^{\prime},\overleftarrow{S}] \in M_{21}~|~rk(\overleftarrow{S})=1\}.
\end{eqnarray*}


In this paper, we are interested in the fixed determinant case. Let $E \in M(2,{\bf a},\chi)$ be identified by the triple $(E_{1},E_{2},\overrightarrow{T})$ as well as the triple $(E_{1}^{\prime},E_{2}^{\prime},\overleftarrow{S}).$ Let $\chi_{i}:= \chi(E_{i})$ and $\chi_{i}^{\prime}:= \chi(E_{i}^{\prime}),$ for $i=1,2.$ If $\chi_{i}$ satisfy the inequalities in \eqref{4}, then $E \in M_{12}$ and if $\chi_{i}^{\prime}$ satisfy the inequalities in \eqref{5}, then $E \in M_{21}.$ Let $J^{d_{i}}(X_{i})$ be the Jacobian of line bundles of \text{\text{deg}}ree $d_{i}$ on $X_{i},$ where $d_{i}= \chi_{i}-2(1-g_{i}),$ for $i=1,2.$ Now by \cite[Proposition 7.1]{N-S}, there is a well-defined surjective morphism
\begin{eqnarray*}
 \text{det}\,:M(2,{\bf a},\chi \neq 0) & \longrightarrow & J^{d_{1}}(X_{1}) \times J^{d_{2}}(X_{2}),
\end{eqnarray*}
given by
\begin{eqnarray*}
E      & \mapsto &  (\Lambda^{2}(E_{1}),\Lambda^{2}(E_{2})) \,\,\hspace{.3cm} \text{if}  \,\,\hspace{.3cm} E \in M_{12}, ~\,\,\,and \\
E      & \mapsto &  \Phi((\Lambda^{2}(E_{1}^{\prime})),\Lambda^{2}(E_{2}^{\prime})) \,\,\hspace{.3cm} \text{if} \,\,\hspace{.3cm} E \in M_{21},
\end{eqnarray*}
where
\begin{equation*}
 \Phi:J^{d_{1}+1}(X_{1}) \times J^{d_{2}-1}(X_{2})  \rightarrow  J^{d_{1}}(X_{1}) \times J^{d_{2}}(X_{2})
\end{equation*}
is an isomorphism defined by
\begin{equation*}
 (L_{1},L_{2})  \mapsto  (L_{1} \otimes \mathcal{O}_{X_{1}}(-p),L_{2} \otimes \mathcal{O}_{X_{2}}(p)).
\end{equation*}
Now let us fix $L_{1} \in J^{d_{1}}(X_{1})$ and $L_{2} \in J^{d_{2}}(X_{2})$, we write ${\bf \xi}\,=\,(L_1,L_2)$. Then the fixed determinant moduli space $M(2,\mathbf{a},\chi,\xi)$ is by definition $\text{det}\,^{-1}({\bf \xi}).$ By \cite[Proposition 7.2]{N-S}, it is reduced.
Let $\text{det}\,_{12}$ (resp. $\text{det}\,_{21}$) be the morphism $\text{det}\,|_{M_{12}}$ (resp. $\text{det}\,|_{M_{21}}$). For notational convenience we write $M_{12}({\bf \xi})$ (resp. $M_{21}({\bf \xi})$) for $\text{det}\,_{12}^{-1}({\bf \xi})$ (resp. $\text{det}\,_{21}^{-1}({\bf \xi})$).
Then we have
\begin{eqnarray*}
 M_{12}({\bf \xi})  & = & \{[E_{1},E_{2},\overrightarrow{T}] \in M_{12} ~|~ \Lambda^{2}(E_{1})=L_{1},~\Lambda^{2}(E_{2})=L_{2} \},
\end{eqnarray*}
and
\begin{eqnarray*}
M_{21}({\bf \xi})   & = & \{[E_{1}^{\prime},E_{2}^{\prime},\overleftarrow{S}] \in M_{21} ~|~ \Lambda^{2}(E_{1}^{\prime})=L_{1} \otimes \mathcal{O}_{X_{1}}(p),\Lambda^{2}(E_{2}^{\prime})=L_{2} \otimes \mathcal{O}_{X_{2}}(-p)) \}.
\end{eqnarray*}
By \cite[Proposition 6.5]{S-B}, the fixed determinant moduli space is a connected, projective scheme with exactly two smooth irreducible components $M_{12}({\bf \xi})$ and $M_{21}({\bf \xi})$, meeting transversally along the smooth divisor $N({\bf \xi}) = M_{12}({\bf \xi}) \cap N$ (which is identified with $M_{21}({\bf \xi}) \cap N$). Since $\chi$ is assumed to be an odd integer, and $\chi = \chi_{1} + \chi_{2}-2,$ we can conclude that either $\chi_{1}$ is odd or $\chi_{2}$ is odd and not both. (Same argument applies to $\chi_{1}^{\prime}$ and $\chi_{2}^{\prime}$ also).
\par
Our aim in this paper is to prove that both $M_{12}({\bf \xi})$ and $M_{21}({\bf \xi})$ are rational. First we prove that $M_{12}({\bf \xi})$ is rational. That the other component $M_{21}({\bf \xi})$ is rational follows from similar arguments.

\section{Construction of a stable family}

Let $i_{1}$ and $i_{2}$ be the closed immersions given by $X_{1} \rightarrow X$ and $X_{2} \rightarrow X$ respectively. We choose an invertible sheaf $L_{1}$ on $X_{1}$ such that it is generated by global sections and is of \text{\text{deg}}ree $2g_{1}-1$ (see Remark \ref{Remark 3.2}(a)). Let $L_{2}$ be an invertible sheaf on $X_{2}$ of \text{\text{deg}}ree $2g_{2}.$ Clearly $H^{1}(X_{j},L_{j})=0,$ for $j=1,2.$ Also by \cite[Lemma 5.2]{PN}, $L_{2}$ is generated by global sections. Now if $\overrightarrow{\lambda}:L_{1}(p) \rightarrow L_{2}(p)$ is an isomorphism of vector spaces, then the triple $(L_{1},L_{2},\overrightarrow{\lambda})$ corresponds to an invertible sheaf $L$ on $X$ and we have
\begin{eqnarray}\label{6}
\chi(L) = \chi(L_{1},L_{2},\overrightarrow{\lambda}) =  \chi(L_{1}) + \chi(L_{2}) -rk(L_{2}) = g ,
\end{eqnarray}
where $g=g_{1}+g_{2}.$ Also by \cite[Proposition 2.2]{N-S}, we have the following short exact sequence
\begin{equation}\label{7}
 0\rightarrow L \rightarrow i_{1_{*}}(L_{1}) \oplus i_{2_{*}}(L_{2}) \rightarrow T_{\lambda} \rightarrow 0,
\end{equation}
where $T_{\lambda}$ is supported only at $p,$ and over the residue field $k(p),$ it is a vector space of \text{\text{\text{\text{dim}}}}ension one. By \cite[Lemma 2.3]{N-S},
 \begin{equation}\label{8}
 L = \{ (v,w) \in i_{1_{*}}(L_{1}) \oplus i_{2_{*}}(L_{2}) ~|~ \overrightarrow{\lambda}(v(p)) = w(p) \}.
 \end{equation}
\begin{lemma}\label{Lemma 3.1}
Let $L$ be as above. Then

  (i) The functor $H^{0}(X,-)$ applied to \eqref{7} is exact.

  (ii) \text{\text{\text{\text{dim}}}} ($H^{0}(X,L))=g$ and \text{\text{\text{\text{dim}}}} ($H^{1}(X,L))=0.$

  (iii) \text{\text{\text{\text{dim}}}} ($H^{0}(X,L^{*}))=0$ and \text{\text{\text{\text{dim}}}} ($H^{1}(X,L^{*}))= 3g-2,$ where $L^{*}$ is the dual of $L.$

\begin{proof}
 Applying the functor $H^{0}(X,-)$ to \eqref{7}, we get the exact sequence
\begin{equation}\label{9}
 0\rightarrow H^{0}(X,L) \rightarrow H^{0}(X_{1},L_{1}) \oplus H^{0}(X_{2},L_{2}) \xrightarrow{\beta} H^{0}(X,T_{\lambda}).
\end{equation}
(Here we are using the fact that $H^{0}(X_{j},L_{j})=H^{0}(X,i_{j_{*}}(L_{j}))$ as $i_{j}$'s are closed immersions $X_{j} \rightarrow X$).
Our aim is to show that the map  $\beta$ is surjective. Since $H^{0}(X,T_{\lambda})$ is one \text{\text{\text{\text{dim}}}}ensional, it is enough to show that $\beta$ is a non-zero map. Now, as $\text{\text{deg}}~(L_{1})=2g_{1}-1$ and $H^{1}(X_{1},L_{1})=0,$ it follows that $\text{\text{\text{\text{dim}}}}~(H^{0}(X_{1},L_{1}))=g_{1}.$ Similarly $\text{\text{\text{\text{dim}}}}~(H^{0}(X_{2},L_{2}))=g_{2}+1.$

Consider the natural maps
\begin{equation}\label{10}
 \phi_{j}:H^{0}(X_{j},L_{j}) \rightarrow L_{j}(p),
\end{equation}
for $j=1,2.$ As $L_{j}$'s are generated by global sections, these maps are surjective. Let $(v_{1},w_{1}) \in H^{0}(X_{1},L_{1}) \oplus H^{0}(X_{2},L_{2})$ be such that $v_{1}(p) \neq 0,$  $w_{1}(p) \neq 0$ and $\overrightarrow{\lambda}(v_{1}(p))=w_{1}(p).$ Then by \eqref{8}, $(v_{1},0)$ does not belong to $H^{0}(X,L),$ hence $\beta(v_{1},0) \neq 0$ in $H^{0}(X,T_{\lambda}).$ This proves $(i)$.

$(ii)$ is a direct consequence of $(i)$.

Since pull-back operation commutes with tensor product,
we have $i_{j}^{*}(L^{*})= L_{j}^{*},$ for $j=1,2.$ As $\text{\text{deg}}~(L_{1})= 2g_{1}-1$ and $\text{\text{deg}}~(L_{2})=2g_{2},$ we have $H^{0}(X_{j},L_{j}^{*})=0$ for $j=1,2.$ Now by \eqref{7}, \eqref{9} (applying to $L^{*}$) we get
$H^{0}(X,L^{*})=0$. Hence,
\begin{eqnarray*}
 \text{\text{\text{\text{dim}}}}~ (H^{1}(X,L^{*})) &=& - \chi(L^{*}) \\
                    &=& - \chi(L_{1}^{*}) - \chi(L_{2}^{*}) +1 \\
                    &=& 3g-2.
\end{eqnarray*}
This proves $(iii).$
\end{proof}
\end{lemma}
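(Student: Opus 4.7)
The plan is to extract (i) from the fact that $T_\lambda$ has one-dimensional global sections, then deduce (ii) by Riemann--Roch on each smooth component, and obtain (iii) by a parallel argument applied to $L^*$.

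For (i), I first note that $T_\lambda$ is a skyscraper sheaf at $p$ with $H^0(X,T_\lambda)$ one-dimensional, so after taking the long exact sequence associated to \eqref{7} it suffices to produce a single pair $(v,w) \in H^0(X_1,L_1) \oplus H^0(X_2,L_2)$ whose image under $\beta$ is nonzero. The description \eqref{8} says that an element of $i_{1_*}(L_1)(p)\oplus i_{2_*}(L_2)(p)$ lifts into $L$ near $p$ precisely when $\overrightarrow{\lambda}(v(p)) = w(p)$. Since $L_1$ is globally generated by hypothesis, I can pick $v \in H^0(X_1,L_1)$ with $v(p) \neq 0$; then taking $w = 0$ yields $\overrightarrow{\lambda}(v(p)) \neq 0 = w(p)$ (using that $\overrightarrow{\lambda}$ is an isomorphism), so $\beta(v,0)$ is nonzero, establishing (i).

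For (ii), since $\deg L_1 = 2g_1-1 > 2g_1-2$ and $\deg L_2 = 2g_2 > 2g_2-2$, Serre duality gives $H^1(X_j,L_j) = 0$, and Riemann--Roch on the smooth curves $X_j$ yields $\dim H^0(X_1,L_1) = g_1$ and $\dim H^0(X_2,L_2) = g_2+1$. Combining with (i) and $\dim H^0(X,T_\lambda) = 1$, the alternating sum of dimensions produces $\dim H^0(X,L) = g_1 + (g_2+1) - 1 = g$. Since $\chi(L) = g$ by \eqref{6}, we conclude $\dim H^1(X,L) = 0$.

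For (iii), I would repeat the same strategy on $L^*$. Observing that $i_j^*(L^*) \cong L_j^*$ has negative degree, I get $H^0(X_j,L_j^*) = 0$ for $j=1,2$, so the analogue of \eqref{9} for $L^*$ immediately forces $H^0(X,L^*) = 0$. For $H^1$, I would compute $\chi(L^*)$ via the triple formula \eqref{2}:
\[
\chi(L^*) = \chi(L_1^*) + \chi(L_2^*) - 1 = (-3g_1 + 2) + (-3g_2 + 1) - 1 = -(3g-2),
\]
so $\dim H^1(X,L^*) = -\chi(L^*) = 3g-2$. The main obstacle in this whole argument is the surjectivity of $\beta$ in (i); everything else is routine bookkeeping once this is in hand. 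That surjectivity relies critically on the global generation of $L_1$ (and $L_2$), which is precisely the reason for imposing the degree choices $\deg L_1 = 2g_1-1$ and $\deg L_2 = 2g_2$ (see Remark \ref{Remark 3.2}(a)) in the setup.
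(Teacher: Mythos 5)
Your proposal is correct and follows essentially the same route as the paper: surjectivity of $\beta$ via a section $v$ of $L_1$ with $v(p)\neq 0$ so that $(v,0)\notin H^0(X,L)$ by \eqref{8}, then dimension counts from Riemann--Roch and $\chi(L)=g$ for (ii), and the negative-degree vanishing plus the triple Euler characteristic formula for (iii). The only (immaterial) difference is that the paper also chooses a matching section $w_1$ with $\overrightarrow{\lambda}(v_1(p))=w_1(p)$, which it reuses later for a basis of $H^0(X,L)$, whereas you dispense with it.
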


\begin{remark}\label{Remark 3.2}
(a) In the above Lemma, we assume $L_{1}$ on $X_{1}$ to be globally generated and of $\text{\text{deg}}ree~2g_{1}-1.$ To see the existence of such an invertible sheaf $L_{1}$, one can take a \text{\text{deg}}ree one invertible sheaf $\mathcal{L}_{1}$ on $X_{1}$ such that $H^{0}(X_{1},\mathcal{L}_{1})=0$ and define $L_{1}$ to be $\omega_{X_{1}} \otimes \mathcal{L}_{1},$ where $\omega_{X_{1}}$ is the canonical sheaf on $X_{1}.$ The existence of such an $\mathcal{L}_{1}$ is clear because the genus $g_{1} \geq 2.$

(b) Let $q = 3g-2.$ Then by fixing a basis of $H^{1}(X,L^{*}),$ we can identify it with $k^{q}.$ We have the natural $k^{*}-$action on $k^{q}$ and
      \begin{equation*}
      W = \{ (a_{1},a_{2}, \cdots,a_{q}) \in k^{q}~| ~a_{1} \neq 0 \}
      \end{equation*}
   is clearly an invariant open subset of $k^{q}$ under the $k^{*}-$ action.

   Let $A:= \{(a_{1},a_{2}, \cdots,a_{q})\in W ~|~ a_{1}=1 \}$ (Clearly $A$ is Zariski closed  and every orbit of the $k^{*}-action$ on $W$ meets $A$ in exactly one point).
\end{remark}

Since the maps $\phi_{j}$'s mentioned in \eqref{10} are surjective, we have $\text{\text{\text{\text{dim}}}}~(ker(\phi_{1}))=g_{1}-1$ and $\text{\text{\text{\text{dim}}}}~(ker(\phi_{2}))=g_{2}.$ Let $\{v_{2}, \cdots, v_{g_{1}}\}$ be a basis of $ker(\phi_{1})$ and $\{w_{2}, \cdots, w_{g_{2}+1}\}$ be a basis of $ker(\phi_{2}).$ These bases can be extended to the bases $\{v_{1},v_{2}, \cdots, v_{g_{1}}\}$ and $\{w_{1},w_{2}, \cdots, w_{g_{2}+1}\}$ of $H^{0}(X_{1},L_{1})$ and $H^{0}(X_{2},L_{2})$ respectively where $v_{1}$ and $w_{1}$ are as in the proof of the Lemma \ref{Lemma 3.1}. It is also clear from \eqref{8}  that $(v_{1},w_{1}),(v_{2},0),\cdots,(v_{g_{1}},0),
  (0,w_{2}),\cdots,(0,w_{g_{2}+1})$ will form a basis for $H^{0}(X,L).$

Suppose $(0,0) \neq (v,w) \in H^{0}(X,L).$ Then we have
      \begin{eqnarray*}
       (v,w) &=& \alpha_{1}(v_{1},w_{1}) + \alpha_{2}(v_{2},0) + \cdots + \alpha_{g_{1}}(v_{g_{1}},0)\\
        & & + \beta_{2}(0,w_{2}) + \cdots + \beta_{g_{2}+1}(0,w_{g_{2}+1}),
      \end{eqnarray*}
where $\alpha_{i}$'s and $\beta_{j}$'s are scalars and at least one of them is non-zero.

We know that every non-zero section $(v,w)$ defines a non-zero map  $\mathcal{O}_{X} \rightarrow L.$ Further, this map is injective if and only if both $v$ and $w$ are non-zero which is true if and only if at least one $\alpha_{i} \neq 0$ and at least one $\beta_{j} \neq 0$ or $\alpha_{1} \neq 0.$ Let
      \begin{equation*}
        C^{\prime} = \{(v,w):= \phi \in H^{0}(X,L)~|~\phi :\, \mathcal{O}_{X} \hookrightarrow L ~\text{injective}\}.
      \end{equation*}
      Clearly $C^{\prime}$ is a non-empty open subset in $H^{0}(X,L).$

\begin{lemma}(cf.~\cite{H-L})\label{Lemma 3.3}
 Let $L$ be as above. Then there exists a vector space $V$ and a universal extension
 \begin{equation}\label{11}
  0 \rightarrow \mathcal{O}_{X \times V} \rightarrow \tilde{\mathcal{E}} \rightarrow \pi^{*}(L) \rightarrow 0
 \end{equation}
of bundles over $V \times X$ (where $\pi:V \times X \rightarrow X$ is the projection map), such that there is a natural isomorphism
\begin{equation*}
 \alpha:V \rightarrow H^{1}(X,L^{*})
\end{equation*}
where for each $v \in V$, $\alpha(v)$ is the element corresponding to the restriction of the extension \eqref{11} to $\{v\} \times X.$
\end{lemma}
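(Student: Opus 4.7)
The plan is to carry out the standard universal extension construction over the affine space $V := H^1(X,L^*)$, viewed as a scheme, and identify this $V$ with the vector space whose points classify extension classes. Since $L$ is invertible, we have the canonical identification
\[
\mathrm{Ext}^1_X(L,\mathcal{O}_X) \;\cong\; H^1(X,L\otimes L^{-1})^{\!*\!*} \;=\; H^1(X,L^*),
\]
so a point $v \in V$ corresponds to an equivalence class of extensions $0\to\mathcal{O}_X\to E_v\to L\to 0$. The goal is to exhibit a single extension on $V\times X$ whose fibre over $v$ realizes $E_v$.

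First I would compute $\mathrm{Ext}^1_{V\times X}(\pi^*L,\mathcal{O}_{V\times X})$. Because $\pi^*L$ is invertible on $V\times X$, this equals $H^1(V\times X,\pi^*L^*)$. Since $V$ is an affine space, $H^i(V,\mathcal{O}_V)=0$ for $i\ge 1$, and by the Leray spectral sequence (or the Künneth formula for the projection, applied factorwise since $V\to\mathrm{Spec}\,k$ is flat) one obtains
\[
H^1(V\times X,\pi^*L^*) \;\cong\; H^0(V,\mathcal{O}_V)\otimes_k H^1(X,L^*).
\]
Inside this group there is a canonical "tautological" element, obtained from the identity $\mathrm{id}\in\mathrm{End}(V)=V^*\otimes V$ via the inclusion $V^*\hookrightarrow H^0(V,\mathcal{O}_V)$ of linear functions on $V$. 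Explicitly, choosing a basis $e_1,\dots,e_q$ of $V=H^1(X,L^*)$ with dual coordinates $x_1,\dots,x_q$, the class is $\sum_i x_i\otimes e_i$. I would then let $\tilde{\mathcal{E}}$ be the extension on $V\times X$ associated to this class.

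Next I would verify the universal property: for $v\in V$, the restriction $\tilde{\mathcal{E}}|_{\{v\}\times X}$ has extension class $\alpha(v)=v\in H^1(X,L^*)$. This reduces to checking that the pullback map on Ext groups is compatible with the Künneth decomposition above, which is formal once we note that evaluating $\sum_i x_i\otimes e_i$ at $v=(v_1,\dots,v_q)$ gives $\sum_i v_i e_i=v$. The injectivity of the map $V\to H^1(X,L^*)$ in the statement is automatic from this construction, and surjectivity follows since every class in $H^1(X,L^*)$ occurs as some fibre. Defining $\alpha:V\to H^1(X,L^*)$ as the resulting bijection yields the required natural isomorphism.

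The only delicate point I foresee is justifying cohomology and base-change on the reducible nodal curve $X$: we need that restriction $H^1(V\times X,\pi^*L^*)\to H^1(\{v\}\times X,L^*)$ literally sends our tautological class to the fibrewise class. This is fine because $\pi$ is flat with $H^0$ and $H^1$ of $L^*$ constant in $v$, so Grauert/cohomology-and-base-change applies; the subtle flat base-change over a singular base is not needed since we only pull back along closed immersions of smooth points $\{v\}\hookrightarrow V$. Given this, the construction of $\tilde{\mathcal{E}}$ as the extension classified by $\sum_i x_i\otimes e_i$ completes the proof, following the standard argument in Huybrechts--Lehn to which the statement refers.
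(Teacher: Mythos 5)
Your construction is correct and is essentially the paper's own approach: the paper gives no proof of Lemma 3.3, citing Lange's universal families of extensions (and, in Remark 3.4, Narasimhan--Ramanan for smooth $X$), and your argument --- take $V=H^1(X,L^*)$, identify $\mathrm{Ext}^1_{V\times X}(\pi^*L,\mathcal{O}_{V\times X})\cong H^0(V,\mathcal{O}_V)\otimes_k H^1(X,L^*)$ via K\"unneth (the other term vanishing since $V$ is affine, and in fact also since $H^0(X,L^*)=0$), and take the extension classified by the tautological class $\sum_i x_i\otimes e_i$, whose restriction to $\{v\}\times X$ is the class $v$ --- is precisely that standard construction, valid here because $L$ is invertible on the nodal curve so the extension is automatically locally free of rank two. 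The only blemish is notational: $H^1(X,L\otimes L^{-1})^{**}$ should read $H^1(X,\mathcal{H}om(L,\mathcal{O}_X))=H^1(X,L^*)$, which is the identification you actually use.
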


\begin{remark}\label{Remark 3.4}
\begin{enumerate}
\item{}
Suppose $\tilde{\mathcal{E}}$ is as in Lemma \ref{Lemma 3.3} and $v \in H^{1}(X,L^{*})$ is such that $\text{\text{\text{\text{dim}}}}~(H^{0}(X,\tilde{\mathcal{E}}_{v}))=1.$ Then one can easily see that for any $w \in H^{1}(X,L^{*}),$ $\tilde{\mathcal{E}}_{v} \,\cong \, \tilde{\mathcal{E}}_{w}$ if and only if $v$ and $w$ are in the same orbit under the natural action of $k^{*}$ on $H^{1}(X,L^{*}).$
\item{} When $X$ is smooth the above lemma was proved in \cite[Proposition 3.1, pp. 19-20]{N-R}.
\end{enumerate}
\end{remark}

\begin{lemma}\label{Lemma 3.5}
 Let $L_1$ be as above. Then there exists an extension
\begin{equation}\label{12}
 0\rightarrow \mathcal{O}_{X_1} \rightarrow E_1 \rightarrow L_1 \rightarrow 0 ,
\end{equation}
for which $\text{\text{\text{\text{dim}}}}~(H^{0}(X_1,E_1)=1,$ and such an $E_1$ is stable.
\end{lemma}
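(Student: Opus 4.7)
The plan is to parametrise extensions $0 \to \mathcal{O}_{X_1} \to E_1 \to L_1 \to 0$ by the vector space $\mathrm{Ext}^1(L_1,\mathcal{O}_{X_1}) \cong H^1(X_1, L_1^*)$, which by Riemann-Roch and $\deg L_1 = 2g_1 - 1$ has dimension $3g_1 - 2$. I will first show that a sufficiently general class $\xi$ produces an $E_1$ with $h^0(X_1, E_1) = 1$, and then deduce stability as an essentially free consequence of this cohomological condition.

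From the long exact sequence of \eqref{12} one has
\[
\dim H^0(X_1, E_1) \;=\; 1 + \dim \ker\bigl(\delta_\xi : H^0(X_1, L_1) \to H^1(X_1, \mathcal{O}_{X_1})\bigr),
\]
where $\delta_\xi$ is cup product with $\xi$. For each nonzero $s \in H^0(X_1, L_1)$, let $K_s := \{\eta \in H^1(X_1, L_1^*) : \eta \cup s = 0\}$. By Serre duality the map $\eta \mapsto \eta \cup s$ is the transpose of multiplication by $s$, namely $H^0(X_1, \omega_{X_1}) \to H^0(X_1, \omega_{X_1} \otimes L_1)$, which is injective since $s \neq 0$ and $X_1$ is integral. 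Hence $\dim K_s = (3g_1 - 2) - g_1 = 2g_1 - 2$. Letting $[s]$ vary over $\mathbb{P}H^0(X_1, L_1)$, which has dimension $g_1 - 1$, the union $\bigcup_{[s]} K_s$ has dimension at most $(g_1 - 1) + (2g_1 - 2) = 3g_1 - 3 < 3g_1 - 2$, so a general $\xi$ lies in its complement and yields $h^0(X_1, E_1) = 1$.

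For stability, observe that $\deg E_1 = 2g_1 - 1$ is odd, so semistability and stability coincide, and stability amounts to every line subbundle $M \subset E_1$ satisfying $\deg M \leq g_1 - 1$. Suppose for contradiction $M \subset E_1$ is a saturated rank-one subsheaf with $\deg M \geq g_1$. By Riemann-Roch $h^0(M) \geq 1$; choose $0 \neq t \in H^0(M)$. The composite $\tilde t : \mathcal{O}_{X_1} \xrightarrow{t} M \hookrightarrow E_1$ is a nonzero global section of $E_1$, hence (since $h^0(E_1) = 1$) a scalar multiple of the canonical section $\sigma : \mathcal{O}_{X_1} \hookrightarrow E_1$ coming from the extension. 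Thus $\tilde t$ factors through both $M$ and $\mathcal{O}_{X_1}$ as subsheaves of $E_1$, and therefore through the intersection $M \cap \mathcal{O}_{X_1}$ inside $E_1$. But $M$ and $\mathcal{O}_{X_1}$ are distinct saturated rank-one subsheaves (their degrees differ), so they agree only at finitely many points, forcing $M \cap \mathcal{O}_{X_1}$ to be a torsion sheaf, which admits no nonzero map from $\mathcal{O}_{X_1}$; this contradicts $\tilde t \neq 0$.

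The main technical subtlety is the stability deduction, which hinges on the fact that two distinct saturated rank-one subsheaves of a rank-two bundle on a smooth curve intersect in a torsion sheaf: once $h^0(E_1) = 1$ is known, no destabilising line subbundle of positive degree can coexist with the subbundle $\mathcal{O}_{X_1}$. The remainder is routine dimension counting combined with Serre duality.
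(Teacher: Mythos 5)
Your proposal is correct, but it takes a different route from the paper in the literal sense: the paper's proof of Lemma \ref{Lemma 3.5} is just a citation, observing that the existence of an extension with $\dim H^{0}(X_1,E_1)=1$ is a special case of Newstead's Lemma 5 and that stability of such an $E_1$ is a special case of his Lemma 6 (both in \cite{N}). You instead prove both statements from scratch: the dimension count (Serre duality makes $\eta\mapsto \eta\cup s$ surjective for $s\neq 0$, so each $K_s$ has codimension $g_1$, and the incidence-variety count $(g_1-1)+(2g_1-2)=3g_1-3<3g_1-2$ shows a general class kills all lifts), and then stability from $h^{0}(E_1)=1$ via the observation that a destabilising line subbundle $M$ with $\deg M\geq g_1$ has a section whose image in $E_1$ must coincide with the canonical copy of $\mathcal{O}_{X_1}$. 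This is essentially the argument Newstead gives, and it closely parallels the paper's own proof of the companion Lemma \ref{Lemma 3.6} on $X_2$ (which uses the same kernel-of-$\tilde{\phi}$ description and the same projection/dimension trick, and deduces semistability by forcing a destabilising subbundle to be $\mathcal{O}_{X_2}$); so your write-up in effect supplies the details the paper outsources.

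One small inaccuracy in the stability step: the claim that a torsion sheaf ``admits no nonzero map from $\mathcal{O}_{X_1}$'' is false in general (skyscraper sheaves do receive nonzero maps). The correct reason the argument closes is that $M\cap\mathcal{O}_{X_1}$ is a subsheaf of the locally free sheaf $E_1$, hence torsion-free; being also torsion it is zero, so $\tilde t$ would be zero, a contradiction. Equivalently, and a bit more directly: the image of $\tilde t$ equals the image of the canonical section $\sigma$, so $\sigma(\mathcal{O}_{X_1})\subseteq M$; since $E_1/\sigma(\mathcal{O}_{X_1})\cong L_1$ is torsion-free, $\sigma(\mathcal{O}_{X_1})$ is saturated, whence $M=\sigma(\mathcal{O}_{X_1})$, contradicting $\deg M\geq g_1>0$. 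Either repair is immediate, so the proof stands.
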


\begin{proof}
 The existence of such an extension on $X_1$ can be seen as a special case of \cite[Lemma 5]{N}, and stability of the bundle $E_1$ can be seen as a special case of \cite[Lemma 6]{N}.
\end{proof}

\begin{lemma}\label{Lemma 3.6}
 Let $L_2$ be as above. Then there exists an extension
 \begin{equation}\label{13}
 0\rightarrow \mathcal{O}_{X_2} \rightarrow E_2 \rightarrow L_2 \rightarrow 0 ,
\end{equation}
for which $\text{\text{\text{\text{dim}}}}~(H^{0}(X_2,E_2)=2,$ and such an $E_2$ is semi-stable.
\end{lemma}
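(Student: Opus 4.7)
The plan is to mirror the proof of Lemma \ref{Lemma 3.5} (which invokes Newstead's Lemmas 5 and 6), adapted to the case $\deg L_2 = 2g_2$ where $\deg E_2$ is even and only semi-stability can be achieved. First I would parameterize extensions of the form \eqref{13} by the vector space $\operatorname{Ext}^1(L_2, \mathcal{O}_{X_2}) \cong H^1(X_2, L_2^{-1})$, of dimension $3g_2 - 1$ by Riemann--Roch. Applying $H^{*}(X_2, -)$ to \eqref{13} and using $H^1(X_2, L_2) = 0$ gives
\begin{equation*}
0 \to k \to H^0(X_2, E_2) \to H^0(X_2, L_2) \xrightarrow{\delta_\eta} H^1(X_2, \mathcal{O}_{X_2}) \to H^1(X_2, E_2) \to 0,
\end{equation*}
so $\chi(E_2) = 2$ and $h^0(X_2, E_2) = 2$ precisely when the connecting map $\delta_\eta$ (cup product with $\eta$) is surjective; this is a Zariski open condition cutting out $U_1 \subseteq H^1(X_2, L_2^{-1})$.

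Next I would verify that $U_1 \neq \emptyset$ by a Serre-duality dimension count. Surjectivity of $\delta_\eta$ dualizes to injectivity of $H^0(X_2, \omega_{X_2}) \to H^0(X_2, L_2)^{\vee}$, which fails at a given non-zero $\sigma$ exactly when $\eta$ annihilates the subspace $\sigma \cdot H^0(X_2, L_2) \subset H^0(X_2, L_2 \otimes \omega_{X_2})$ under the Serre pairing. Since multiplication by a non-zero $\sigma$ is injective, this subspace has dimension $g_2 + 1$, so its annihilator in $H^1(X_2, L_2^{-1})$ has dimension $(3g_2 - 1) - (g_2 + 1) = 2g_2 - 2$. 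As $[\sigma]$ varies in $\mathbb{P}^{g_2 - 1}$, the union of these annihilators has dimension at most $(g_2 - 1) + (2g_2 - 2) = 3g_2 - 3$, strictly less than $3g_2 - 1$. Hence $U_1$ is dense open.

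I would then show the semi-stability locus $U_2 \subseteq H^1(X_2, L_2^{-1})$ is also dense open by a Brill--Noether parameter count. If some line subbundle $M \subset E_2$ destabilizes, then $\deg M > g_2$ and the composition $M \to L_2$ is non-zero (otherwise $M \subset \mathcal{O}_{X_2}$ forces $\deg M \le 0$), so $M = L_2(-D)$ for an effective divisor $D$ of degree $2g_2 - \deg M$. For $M$ to lift to a subbundle of $E_2$, $\eta$ must lie in $\ker(H^1(X_2, L_2^{-1}) \to H^1(X_2, M^{-1}))$, which has dimension $2g_2 - \deg M$ by the exact sequence $0 \to L_2^{-1} \to M^{-1} \to T \to 0$ (with $T$ torsion of the same length). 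Varying $D$ over $X_2^{(2g_2 - \deg M)}$ of the same dimension and letting $\deg M$ range over $(g_2, 2g_2]$, the destabilizing locus has dimension at most $2(g_2 - 1) = 2g_2 - 2 < 3g_2 - 1$ (using $g_2 \ge 2$). Any $\eta \in U_1 \cap U_2$ then yields an $E_2$ satisfying the conclusion.

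The hard part will be step one (non-emptiness of $U_1$), since the standard base-point-free pencil trick does not directly apply here: we have $h^1(X_2, L_2^{-1} \otimes \omega_{X_2}) = h^0(X_2, L_2) \neq 0$, which blocks the usual argument. The Serre-duality dimension count above is the essential substitute; the semi-stability step is a routine Brill--Noether-type count.
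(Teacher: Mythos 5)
Your proposal is correct, but it takes a genuinely different route from the paper at both steps. For the condition $\dim H^{0}(X_2,E_2)=2$, the paper does not pass to the dual picture: it first notes that $h^{0}(E_2)\geq 2$ for \emph{every} extension class, and then argues by contradiction on the incidence variety $Y=\{(e_2,\phi_2)\ :\ \phi_2\neq 0,\ \tilde{\phi_2}(e_2)=0\}$ of pairs (extension class, section of $L_2$ that lifts), of dimension $h^{1}(L_2^{*})+1$: if every $e_2\in p_1(Y)$ admitted two independent lifting sections, then $\dim p_1(Y)\leq \dim Y-2=h^{1}(L_2^{*})-1$, so some class would admit no lifting section at all, i.e.\ $h^{0}=1$, contradicting $h^{0}\geq 2$. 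Your route instead characterizes failure of surjectivity of $\delta_\eta$ via Serre duality ($\eta$ annihilating $\sigma\cdot H^{0}(L_2)$ for some $0\neq\sigma\in H^{0}(\omega_{X_2})$) and bounds the bad locus by $3g_2-3<3g_2-1$; this is sound and in fact gives slightly more, namely that the good locus is dense rather than merely nonempty. The divergence is larger for semi-stability: the paper gets it for free from $h^{0}(E_2)=2$ --- a line subbundle $G_2$ with $\deg G_2>g_2$ has $\chi(G_2)>1$, hence $h^{0}(G_2)=2=h^{0}(E_2)$, which forces the canonical section $\mathcal{O}_{X_2}\to E_2$ to factor through $G_2$, so $G_2\cong\mathcal{O}_{X_2}$, a contradiction --- whereas you run a separate Lange/Newstead-type parameter count over symmetric products ($M=L_2(-D)$, kernel of $H^{1}(L_2^{-1})\to H^{1}(M^{-1})$ of dimension $\deg D$), bounding the non-semistable locus by $2g_2-2$, and then intersect the two dense open sets. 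Both counts in your argument are correct (including the identification of the lifting obstruction with the pullback of the extension class). What each buys: your version shows the \emph{generic} extension simultaneously satisfies both conditions, a stronger and more flexible statement; the paper's version is more economical, since semi-stability is deduced from the same section count with no second dimension count needed.
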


\begin{proof}
  Suppose $e_2 \in H^{1}(X_2,L_2^{*})$ and \eqref{13} is the corresponding extension. Then it is clear that $\chi(E_2)=2$ and therefore $\text{\text{\text{\text{dim}}}}~(H^{0}(X_2,E_2)) \geq 2.$

  Suppose $\phi_2 \in H^{0}(X_2,L_2)$ is any non-zero section. Then we have an injective morphism
  \begin{eqnarray}\label{14}
    \phi_2:\,\mathcal{O}_{X_2} & \hookrightarrow & L_2 .
  \end{eqnarray}
Tensoring \eqref{14} by the canonical sheaf $\omega_{X_2}$ and applying the global section functor, we get the map
 \begin{equation*}
  H^{0}(X_2,\omega_{X_2}) \hookrightarrow H^{0}(X_2,L_2 \otimes \omega_{X_2}).
 \end{equation*}
Taking dual and using the duality theorem, we get the map
 \begin{equation*}
  H^{1}(X_2,L_2^{*}) \xrightarrow{\tilde{\phi_2}}  H^{1}(X_2,\mathcal{O}_{X_2}) .
 \end{equation*}
 Clearly $\tilde{\phi_2}$ is onto. This implies
 \begin{eqnarray}\label{15}
  \text{\text{\text{\text{dim}}}}~(ker(\tilde{\phi_2})) & = & \text{\text{\text{\text{dim}}}}~(H^{1}(X_2,L_2^{*})) -g_2  ~> ~0.
 \end{eqnarray}
 Applying the sheaf functors $\mathcal{H}om(L_2,-)$ and $\mathcal{H}om(\mathcal{O}_{X_2},-)$ to \eqref{13} and taking the long exact sequence, we get the following
 commutative diagram -
$$\xymatrix{
0 \ar@{->}[r]^{} \ar@{->}[d]^{} & Hom(L_2,\mathcal{O}_{X_2}) \ar@{->}[r]^{} \ar@{->}[d]^{} & Hom(L_2,E_2) \ar@{->}[r]^{} \ar@{->}[d]^{} & Hom(L_2,L_2) \ar@{->}[r]^{} \ar@{->}[d]^{} & H^{1}(X_2,L_2^{*}) \ar@{->}[r]^{}  \ar@{->}[d]^{\tilde{\phi_2}}  & \cdots \\
0 \ar@{->}[r]^{} & H^{0}(X_2,\mathcal{O}_{X_2})  \ar@{->}[r]^{} & H^{0}(X_2,E_2)  \ar@{->}[r]^{} & H^{0}(X_2,L_2) \ar@{->}[r]^{} & H^{1}(X_2,\mathcal{O}_{X_2}) \ar@{->}[r]^{} & \cdots
}$$
From this diagram, it is clear that $\phi_2$ lifts to a section on $E$ if and only if $\tilde{\phi_2}(e_2)=0.$ This fact is proved in \cite[Lemma 3.1]{N-R}, in greater generality. Also
\begin{eqnarray}\label{16}
 \text{\text{\text{\text{dim}}}}~(H^{0}(X_2,E_2)) &=& \text{\text{\text{\text{dim}}}}~(H^{0}(X_2,\mathcal{O}_{X_2})) + \text{\text{\text{\text{dim}}}}~(ker(H^{0}(X_2,L_2)) \rightarrow H^{1}(X_2,\mathcal{O}_{X_2})) \nonumber \\
  & = & 1 + \text{\text{\text{\text{dim}}}}~(ker(H^{0}(X_2,L_2)) \rightarrow H^{1}(X_2,\mathcal{O}_{X_2})).
\end{eqnarray}
So
\begin{eqnarray*}
 \text{\text{\text{\text{dim}}}}~(H^{0}(X_2,E_2))~ >~2 & \Leftrightarrow & \text{\text{\text{\text{dim}}}}~(ker(H^{0}(X_2,L_2)) \rightarrow H^{1}(X_2,\mathcal{O}_{X_2})) > 1.
\end{eqnarray*}
Suppose there exists an $e_2 \in \text{\text{\text{\text{ker}}}}~\tilde{\phi_2}$ (with \eqref{13} as the corresponding extension) such that $\phi_2$ is the only section (up to scalar multiplication) which lifts to $E_2.$ Then we are done.

Suppose this is not the case with any non-zero section $\phi_2 \in H^0(X_2,L_2).$ This means for every non-zero section $\phi_2 \in H^0(X_2,L_2)$ and every $e_2 \in \text{\text{\text{\text{ker}}}}~\tilde{\phi_2}$ (with \eqref{13} as the corresponding extension) there are at least two linearly independent sections that lift to the corresponding bundle $E_2.$

Let
\begin{equation*}
 Y = \{(e_2,\phi_2)~|~\phi_2 \neq 0~and~\tilde{\phi_2}(e_2)=0\} \subset H^1(X_2,L_2^*) \times H^0(X_2,L_2).
\end{equation*}
This implies
\begin{eqnarray*}
 \text{\text{\text{\text{dim}}}}~(Y) &=& \text{\text{\text{\text{dim}}}}~(H^{0}(X_2,L_2))+ \text{\text{\text{\text{dim}}}}~(H^{1}(X_2,L_2^{*})) -g_2 \\
                                     &=& \text{\text{\text{\text{dim}}}}~(H^{1}(X_2,L_2^{*})) + 1.
\end{eqnarray*}
(The last equality is true because $\text{\text{\text{\text{dim}}}}~(H^{0}(X_2,L_2))= g_2+1).$

Now if $e_2 \in p_1(Y)$ (where $p_1$ is the first projection map from $H^1(X_2,L_2^*) \times H^0(X_2,L_2)$), then $\text{\text{\text{\text{dim}}}}~p_1^{-1}(e_2) \cap Y \geq 2$ because $e_2 \in p_1(Y)$ implies the corresponding bundle $E_2$ has at least two linearly independent lifts from $H^0(X_2,L_2)$ according to our assumption. So
\begin{eqnarray*}
 \text{\text{\text{\text{dim}}}}~p_1(Y) & \leq & \text{\text{\text{\text{dim}}}}~(Y) -2 \\
                                        & = &  \text{\text{\text{\text{dim}}}}~(H^{1}(X_2,L_2^{*})) - 1.
\end{eqnarray*}
This implies that there exists an extension $e_2^{\prime} \in H^{1}(X_2,L_2^{*})$ which is not in $p_1(Y).$ So if $E_2^{\prime}$ is the bundle corresponding to $e_2^{\prime},$ then by equation \eqref{16}, $\text{\text{\text{\text{dim}}}}~(H^{0}(X_2,E_2^{\prime}))=1.$ But this is a contradiction as
$\text{\text{\text{\text{dim}}}}~(H^{0}(X_2,E_2)) \geq 2$ for every extension in $H^1(X_2,L_2^{*}).$

This proves that there exists a non-zero section $\phi_2 \in H^0(X_2,L_2)$ and an extension $e_2 \in \text{\text{\text{\text{ker}}}}~\tilde{\phi_2}$ such that $\phi_2$ is the only non-zero section (up to scalar multiplication) which lifts to the corresponding bundle $E_2.$ So by equation \eqref{16}, $\text{\text{\text{\text{dim}}}}~(H^{0}(X_2,E_2))=2.$

Now to prove that such an $E_2$ is semi-stable, let $G_2$ be a line sub-bundle of $E_2.$ We want to prove  $\text{\text{\text{\text{deg}}}}~(G_2) \leq \frac{\text{\text{\text{\text{deg}}}}~(E_2)}{2}=\frac{2g_2}{2}=g_2.$ Suppose $\text{\text{\text{\text{deg}}}}~(G_2) > g_2,$ then $\chi(G_2) > 1$ and  $\text{\text{\text{\text{dim}}}}~H^0(X_2,G_2) > 1.$ But $\text{\text{\text{\text{dim}}}}~(H^{0}(X_2,E_2))=2$ and $G_2 \subset E_2.$ So $\text{\text{\text{\text{dim}}}}~H^0(X_2,G_2)=2.$ This implies the map $\mathcal{O}_{X_2} \rightarrow E_2$ in the extension \eqref{13} factors through $G_2.$ This forces $G_2$ to be isomorphic to $\mathcal{O}_{X_2}.$ This implies $\text{\text{\text{\text{deg}}}}~(G_2)=0,$ which contradicts the fact that $\text{\text{\text{\text{deg}}}}~(G_2) >g_2.$

This proves that $E_2$ is semi-stable.
\end{proof}

Now by \cite[Proposition 2.2]{N-S}, we have the following exact sequence-
\begin{equation*}
 0\rightarrow L^* \rightarrow i_{1_{*}}(L_{1}^*) \oplus i_{2_{*}}(L_{2}^*) \rightarrow T_{\lambda^*} \rightarrow 0,
\end{equation*}
where $(L_1^*,L_2^*,\overrightarrow{\lambda^*})$ is the triple corresponding to $L^*.$
By taking the long exact sequence corresponding to this and observing that $H^0(X,L^*)=H^0(X_1,L_1^*)=H^0(X_2,L_2^*)=0,$ we get -
\begin{equation}\label{Equation 3.12}
 0\rightarrow H^{0}(X,T_{\lambda^*}) \rightarrow H^{1}(X,L^*) \xrightarrow{\gamma}  H^{1}(X_{1},L_{1}^*) \oplus H^{1}(X_{2},L_{2}^*) \rightarrow 0.
\end{equation}
Let $e_1 \in H^1(X_1,L_1^*)$ be an extension as in Lemma \ref{Lemma 3.5} and $e_2 \in H^1(X_2,L_2^*)$ be an extension as in Lemma \ref{Lemma 3.6}. Let the corresponding extensions be \eqref{12} and \eqref{13} respectively. Then $E_1$ is stable, and $E_2$ is semi-stable. Since the map $\gamma$ in the exact sequence \eqref{Equation 3.12} is surjective, given $(e_1,e_2) \in H^{1}(X_{1},L_{1}^*) \oplus H^{1}(X_{2},L_{2}^*),$ there exists an $e \in H^{1}(X,L^*)$ such that $\gamma(e)=(e_1,e_2).$ Let
\begin{equation}\label{Equation 3.13}
 0\rightarrow \mathcal{O}_{X} \rightarrow E \rightarrow L \rightarrow 0
\end{equation}
be the extension corresponding to $e.$ Then $E|_{X_1}=E_1,$  $E|_{X_2}=E_2,$ and so the triple corresponding to $E$ will look like $(E_1,E_2,\overrightarrow{T}),$ where $T:E_1(p) \rightarrow E_2(p)$ is an isomorphism at the node $p.$ Since $E_1$ and $E_2$ are semi-stable and $T$ has full rank, by \cite[Lemma 2.3]{S-B}, the triple $((E_1,E_2,\overrightarrow{T}))$ is semi-stable. So the corresponding vector bundle $E$ is semi-stable. Since $\chi(E)=1$ and $a_1\chi$ is not an integer, semi-stability coincides with stability. So $E$ is stable. Thus we have produced an extension of the form \eqref{Equation 3.13} in $H^1(X,L^*)$ such that $E$ is stable.

\begin{remark}\label{Remark 3.7}
 From the above arguments, it is clear that there exists an extension in $H^1(X,L^*)$ of the form \eqref{Equation 3.13} such that the corresponding bundle $E$ is stable. Since stability is an open condition, the set $B = \{v \in H^{1}(X,L^{*}) ~|~ \tilde{\mathcal{E}}_{v}~is~stable\}$ is a non-empty $k^*-$ invariant open set in $H^1(X,L^*),$ where $\tilde{\mathcal{E}}$ is as in Lemma \ref{Lemma 3.3}. Since $H^1(X,L^*)$ is irreducible, $B \cap W$ is a non-empty $k^*-$ invariant open set in $H^1(X,L^*),$ where $W$ is as in Remark \ref{Remark 3.2}(b). This implies $B \cap A \neq \emptyset,$ where $A$ is as defined in Remark \ref{Remark 3.2}(b). Let $S= B \cap A.$ Then $S$ is a non-empty open subset of the affine space $A$ consisting of stable rank two locally free sheaves $\tilde{\mathcal{E}}_{s}.$
\end{remark}

\section{Rationality}
 We are now in a position to state and prove the main proposition -
 \begin{proposition}\label{Proposition 4.1}
 Let $\chi$ be an odd integer and $a_{1},$ $a_{2}$ be rational numbers such that $0 < a_{1}< a_{2} < 1$ and $a_{1}+a_{2}=1.$ Suppose $a_{1}\chi$ is not an integer and $\chi_{1}$ and $\chi_{2}$ are integers such that
 \begin{eqnarray}\label{17}
  a_{1}\chi < \chi_{1} < a_{1}\chi +1 , ~a_{2}\chi +1 < \chi_{2} < a_{2}\chi +2.
 \end{eqnarray}
  Let $L=(L_{1},L_{2},\overrightarrow{\lambda})$ be an invertible sheaf on $X$ such that $\text{\text{deg}}~(L_{1})= \chi_{1}-2(1-g_{1})$ and $\text{\text{deg}}~(L_{2})=\chi_{2}-2(1-g_{2}).$ Then there exists a non-empty open subset $S$ of an affine space and a locally free sheaf $\mathcal{E}$ of rank two on $S \times X$ such that

  (i) $\text{\text{\text{\text{dim}}}}~(S)=3g-3,$

  (ii) $\mathcal{E}_{s} \cong \mathcal{E}_{t}$  $\Leftrightarrow$  $s=t,$

  (iii) for all $s \in S,$ $\mathcal{E}_{s}$ is stable and $\Lambda^{2}(\mathcal{E}_{s})=L.$
 \begin{proof}
 We prove the Proposition by considering the following two different cases.

 $\mathbf{Case~ 1:}$ Suppose $\chi_{1}$ is odd and $\chi_{2}$ is even and $\chi_1$, $\chi_2$ satisfies \eqref{17}.
 In order to prove the Proposition for this case, we first assume $\chi=1$. This implies $\chi_{1}=1,$ $\chi_{2}=2,$ $\text{\text{deg}}~(L_{1})=2g_{1}-1$, $\text{\text{deg}}~(L_{2})=2g_{2}$ and $L_{2}$ is globally generated. We further assume $L_1$ is globally generated and prove the Proposition in this case. Given such an $L,$ we can apply Lemma \ref{Lemma 3.3} and get an extension $\tilde{\mathcal{E}}$ on $H^{1}(X,L^{*}) \times X$ as in \eqref{11}. Let $q:=\text{\text{\text{\text{dim}}}}~(H^{1}(X,L^{*}))=(3g-2).$ From Remark \ref{Remark 3.2}(b), we obtain an affine space $A$ which is closed in $H^{1}(X,L^{*})$ and is of \text{\text{\text{\text{dim}}}}ension $3g-3.$ Let $S$ be the open subset of $A$ defined in Remark \ref{Remark 3.7}.

 Let $\mathcal{E}^{\prime} = \tilde{\mathcal{E}}\mid_{S \times X}.$ By Remark \ref{Remark 3.7}, $\mathcal{E}^{\prime}_{s}$ is stable and $\Lambda^{2}(\mathcal{E}^{\prime}_{s})=L,$ $\forall~s \in S.$ Since $S$ is open in $A,$ $\text{\text{\text{\text{dim}}}}~(S)=3g-3.$ By the choice of $A,$ and Remark \ref{Remark 3.4}, it is clear that $\mathcal{E}^{\prime}_{s} \cong \mathcal{E}^{\prime}_{t}$ $\Leftrightarrow$ $s=t.$ This proves the proposition for the case $\chi=1$ and $L_{1}$ globally generated.

 Now to prove the proposition for arbitrary odd $\chi_1$ and even $\chi_2$, let $\text{\text{deg}}~(L_{1})=2l_{1}-1,$ and $\text{\text{deg}}~(L_{2})=2l_{2},$ where $l_{1},l_{2}$ are integers. Let $M$ be an invertible sheaf on $X$ given by the triple $(M_{1},M_{2},\overrightarrow{\delta})$ such that $\text{\text{deg}}~(M_{1})=l_{1}-g_{1},$  $\text{\text{deg}}~(M_{2})=l_{2}-g_{2}.$
 Since the pull-back operation commutes with tensor product, the triple corresponding to the invertible sheaf $L \otimes M^{-2}$ will be $(L_{1}\otimes M_{1}^{-2}, L_{2} \otimes M_{2}^{-2}, \overrightarrow{\gamma}),$ where $\overrightarrow{\gamma}$ is the corresponding map at $p$ induced by $\overrightarrow{\lambda}$ and $\overrightarrow{\delta}.$ Clearly $\text{\text{deg}}~(L_{1} \otimes M_{1}^{-2})=2g_{1}-1,$ and $\text{\text{deg}}~(L_{2} \otimes M_{2}^{-2})=2g_{2}.$ We can also assume that $L_{i} \otimes M_{i}^{-2}$ is generated by global sections for $i=1,2,$ by choosing an appropriate $M_{1}$ (see Remark \ref{Remark 4.2}).

 Now corresponding to this invertible sheaf $L \otimes M^{-2},$ we have proved above that there exists a locally free sheaf $\mathcal{E}^{\prime}$ on $S \times X$ satisfying all the required properties. Let  $\mathcal{E}:= \mathcal{E}^{\prime} \otimes p_{X}^{*}(M).$ So $\mathcal{E}_{s} = \mathcal{E}^{\prime}_{s} \otimes M.$ Clearly $\Lambda^{2}(\mathcal{E}_{s})=L$ for every $s \in S.$

 We now claim that $\mathcal{E}_{s}$ is stable. Let $\mathcal{E}_{s}^{\prime}=(E_{1},E_{2},\overrightarrow{T}).$ Then it is clear that $\chi(E_{1})=1$ and $\chi(E_{2})=2.$ Also it is clear that the triple corresponding to $\mathcal{E}_{s}$ is $(E_{1} \otimes M_{1}, E_{2} \otimes M_{2}, \overrightarrow{T} \otimes \overrightarrow{\delta}).$ Now since $\mathcal{E}_{s}^{\prime}$ is stable, by \cite[Theorem 5.1]{N-S}, $E_{1}$ and $E_{2}$ are semi-stable. So $E_{j} \otimes M_{j}$ is semi-stable, for $j=1,2.$ Since $\overrightarrow{T} \otimes \overrightarrow{\delta}$ has full rank, by \cite[Lemma 2.3]{S-B},
\begin{eqnarray*}
 semi-stability~ of~ E_{j} \otimes M_{j} & \Rightarrow & \mathcal{E}_{s} ~ is~semi-stable.
\end{eqnarray*}
Since $\chi(\mathcal{E}_{s})$ is odd and $a_{1}\chi$ is not an integer, semi-stability of $\mathcal{E}_{s}$ implies it is stable for every $s \in S.$

 $\mathbf{Case~ 2:}$ Suppose $\chi_{1}$ is even and $\chi_{2}$ is odd. Let $L_{1}^{\prime}$ be an invertible sheaf on $X_{1}$ of \text{\text{deg}}ree $2g_{1}$ and $L_{2}^{\prime}$ be an invertible sheaf on $X_{2}$ of \text{\text{deg}}ree $2g_{2}-1$ such that it is globally generated. Let $L^{\prime}=(L_{1}^{\prime},L_{2}^{\prime},\overrightarrow{\lambda^{\prime}}),$ where $\lambda^{\prime}$ is a non-zero scalar. Then one can prove all results of section (3) by replacing $L$ in those results by $L^{\prime}$ (The proofs are similar). \\
 Now to prove the Proposition in this case, one first proves the existence of $S$ and a locally free sheaf $\mathcal{E}^{\prime}$ of rank two on $S \times X$ such that $\Lambda^{2}(\mathcal{E^{\prime}}_{s})=L^{\prime},~\forall s \in S$  as in  Case (1), and then tensoring $\mathcal{E}^{\prime}$ by a suitable invertible sheaf $M'\,=\,(M_1',M_2',\overrightarrow{\delta)}$ as in Case(1), one gets a locally free sheaf $\mathcal{E}$ of rank two on $S \times X$ satisfying all the required properties.
 \end{proof}
 \end{proposition}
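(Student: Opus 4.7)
The plan is to realize $\mathcal{E}$ as (an open piece of) a universal extension, directly extending the construction that culminated in Remark \ref{Remark 3.7}. Since $\chi$ is odd and $\chi=\chi_{1}+\chi_{2}-2$, exactly one of $\chi_1,\chi_2$ is odd, so I split into two symmetric cases; I would treat first the case $\chi_1$ odd, $\chi_2$ even.

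I would first handle the normalized subcase $\chi_1=1$, $\chi_2=2$, so that $\deg(L_1)=2g_1-1$, $\deg(L_2)=2g_2$ with both $L_i$ globally generated; this matches Section 3 exactly. Apply Lemma \ref{Lemma 3.3} to get a universal extension $\tilde{\mathcal{E}}$ on $H^1(X,L^*)\times X$; by Lemma \ref{Lemma 3.1}(iii) the base has dimension $3g-2$. Use Remark \ref{Remark 3.2}(b) to cut out an affine slice $A$ of dimension $3g-3$ that meets each $k^*$-orbit inside the invariant open $W$ in exactly one point. Remark \ref{Remark 3.7} then produces a non-empty open $S\subset A$ on which the restriction $\tilde{\mathcal{E}}$ is stable; take $\mathcal{E}':=\tilde{\mathcal{E}}|_{S\times X}$. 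Conditions (i) and (iii) are immediate from the construction, and condition (ii) follows from Remark \ref{Remark 3.4}(1): two extensions give isomorphic bundles iff they lie in a common $k^*$-orbit, and two distinct points of the slice $A$ represent distinct orbits.

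For the general case (with $\chi_1$ odd arbitrary, $\chi_2$ even arbitrary), write $\deg(L_1)=2l_1-1$, $\deg(L_2)=2l_2$, and choose an invertible sheaf $M=(M_1,M_2,\overrightarrow{\delta})$ on $X$ with $\deg(M_i)=l_i-g_i$, so that $L\otimes M^{-2}$ has components of degrees $2g_1-1$, $2g_2$, globally generated for a suitable choice of $M_1$ (exploiting the same flexibility as in Remark \ref{Remark 3.2}(a)). Apply the normalized subcase to $L\otimes M^{-2}$ to obtain $\mathcal{E}'$ on $S\times X$, and set $\mathcal{E}:=\mathcal{E}'\otimes p_X^{*}(M)$. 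The identity $\Lambda^2(\mathcal{E}_s)=L$ holds by construction, while (i), (ii) are preserved verbatim under this twist. Case 2 ($\chi_1$ even, $\chi_2$ odd) would be handled symmetrically: one would redo Section 3 with an auxiliary $L'$ of component degrees $(2g_1, 2g_2-1)$ (the proofs carrying over unchanged), then apply the same tensoring device.

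The main obstacle is verifying stability of $\mathcal{E}_s$ after the twist. If $\mathcal{E}'_s=(E_1,E_2,\overrightarrow{T})$, then stability of $\mathcal{E}'_s$ combined with \cite[Theorem 5.1]{N-S} yields semi-stability of both $E_j$, and tensoring by $M_j$ preserves semi-stability componentwise. Since the induced gluing $\overrightarrow{T}\otimes\overrightarrow{\delta}$ still has full rank, \cite[Lemma 2.3]{S-B} promotes this to semi-stability of the glued triple on $X$; because $\chi$ is odd with $a_1\chi$ non-integral, semi-stability coincides with stability, giving (iii). The secondary subtlety is ensuring that $L_i\otimes M_i^{-2}$ is globally generated so that the normalized subcase genuinely applies; this requires tuning $M_1$ as in Remark \ref{Remark 3.2}(a), but the flexibility in choosing $M_1$ at fixed degree makes this step routine rather than an essential difficulty.
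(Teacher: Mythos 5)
Your proposal is correct and follows essentially the same route as the paper's own proof: normalize to $\chi_1=1,\chi_2=2$ with both $L_i$ globally generated, use Lemma \ref{Lemma 3.3}, Remark \ref{Remark 3.2}(b), Remark \ref{Remark 3.4} and Remark \ref{Remark 3.7} to get $S$ and $\mathcal{E}'$ with properties (i)--(iii), then twist by $M$ (with $M_1$ tuned as in Remark \ref{Remark 4.2}) and deduce stability of $\mathcal{E}_s$ via \cite[Theorem 5.1]{N-S} and \cite[Lemma 2.3]{S-B}, treating Case 2 symmetrically. No substantive differences from the paper's argument.
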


 \begin{remark}\label{Remark 4.2}
  Let $L_{1}$ be an invertible sheaf on $X_{1}$ of \text{\text{deg}}ree $2l_{1}-1,$ where $l_{1}$ is any integer. Then the invertible sheaf $\omega_{X_{1}} \otimes \mathcal{L}_{1} \otimes L_{1}^{-1}$ (where $\omega_{X_{1}}$ and $\mathcal{L}_{1}$ are as in Remark \ref{Remark 3.2}(a), is of \text{\text{deg}}ree $2(g_{1}-l_{1}).$ So there exists an invertible sheaf $N_{1}$ of \text{\text{deg}}ree $(g_{1}-l_{1})$ such that $N_{1}^{2}= \omega_{X_{1}} \otimes \mathcal{L}_{1} \otimes L_{1}^{-1}.$ Let $M_{1}=N_{1}^{-1}.$ Then $L_{1} \otimes M_{1}^{-2} = \omega_{X_{1}} \otimes \mathcal{L}_{1}$ which is globally generated and is of \text{\text{deg}}ree $2g_{1}-1.$
 \end{remark}

\subsection{Proof of Theorem 1.1}

Let $\chi$ and $L$ be as in Proposition \ref{Proposition 4.1}. Then there exists a non-empty open subset $S$ of an affine space and a rank two locally free sheaf $\mathcal{E}$ on $S \times X$ such that properties $(i),$ $(ii)$ and $(iii)$ of Proposition \ref{Proposition 4.1} are satisfied. Since $M_{12}({\bf \xi})$ is a fine moduli space, the sheaf $\mathcal{E}$ on $S \times X$ induces a morphism
 \begin{equation*}
  f: S \rightarrow M_{12}({\bf \xi}).
 \end{equation*}
 By $(ii)$ of Proposition \ref{Proposition 4.1}, $f$ is injective. Since $S$ and $M_{12}({\bf \xi})$ are of the same \text{\text{\text{\text{dim}}}}ension and we are in characteristic zero, this implies that $f$ is birational. So $M_{12}({\bf \xi})$ is rational.

Now, we briefly outline the proof of rationality of the other component $M_{21}({\bf \xi}).$

Suppose $L_{1}$ and $L_{2}$ are as in Proposition \ref{Proposition 4.1}. Then clearly $\text{\text{deg}}~(L_{1} \otimes \mathcal{O}_{X_{1}}(p)) = \chi_{1}^{\prime} - 2(1-g_{1})$ and $\text{\text{deg}}~(L_{2} \otimes \mathcal{O}_{X_{2}}(-p)) = \chi_{2}^{\prime} - 2(1-g_{2}),$ where $\chi_{1}^{\prime}$ and $\chi_{2}^{\prime}$ are integers satisfying the inequalities \begin{equation}\label{18}
  a_{1}\chi + 1 < \chi_{1}^{\prime} < a_{1}\chi +2 , ~a_{2}\chi  < \chi_{2}^{\prime} < a_{2}\chi +1.
 \end{equation}
 Let $\hat{L}=(L_{1} \otimes \mathcal{O}_{X_{1}}(p),L_{2} \otimes \mathcal{O}_{X_{2}}(-p),\overleftarrow{\lambda}),$ where $\lambda$ is a non-zero scalar. Then one can show exactly as in Proposition \ref{Proposition 4.1} that there exists a non-empty open subset $\hat{S}$ of an affine space and a rank two locally free sheaf $\mathcal{\hat{E}}$ on $\hat{S} \times X$ such that

  (i) $\text{\text{\text{\text{dim}}}}~(\hat{S})=3g-3,$

  (ii) $\mathcal{\hat{E}}_{s} \cong \mathcal{\hat{E}}_{t}$  $\Leftrightarrow$  $s=t,$

  (iii) for all $s \in \hat{S},$ $\mathcal{\hat{E}}_{s}$ is stable and $\Lambda^{2}(\mathcal{\hat{E}}_{s})=\hat{L}.$

 Now since $M_{21}({\bf \xi})$ is a fine moduli space, the sheaf $\mathcal{\hat{E}}$ on $\hat{S} \times X$ induces a morphism
 \begin{equation*}
  g: \hat{S} \rightarrow M_{21}({\bf \xi}).
 \end{equation*}
 Also the fact that $\mathcal{\hat{E}}_{s} \cong \mathcal{\hat{E}}_{t}$  $\Leftrightarrow$  $s=t,$ implies $g$ is injective. Since $\hat{S}$ and $M_{21}({\bf \xi})$ are of same \text{\text{\text{\text{dim}}}}ension and we are in characteristic zero, it implies that $g$ is birational. So $M_{21}({\bf \xi})$ is rational. This proves Theorem \ref{Theorem 1.1}.

\end{document}